\documentclass{amsart}

\usepackage[skip=5pt plus1pt, indent=20pt]{parskip}

\usepackage{graphicx} 

\usepackage[utf8]{inputenc}
\usepackage{amssymb} 
\usepackage{amsmath} 
\usepackage{amscd}
\usepackage{amsbsy}
\usepackage{comment}
\usepackage[matrix,arrow]{xy}
\usepackage{hyperref}
\usepackage{float}
\usepackage[utf8]{inputenc}
\usepackage{xcolor}
\usepackage{enumerate}
\usepackage{booktabs}
\usepackage{makecell}
\usepackage{tabularx}
\usepackage{breqn}
\usepackage{multirow}

\newcommand{\Z}{\mathbb{Z}}
\newcommand{\Q}{\mathbb{Q}}

\begin{document}

\newtheorem{theorem}{Theorem}
\newtheorem{lemma}{Lemma}[section]
\newtheorem{proposition}[lemma]{Proposition}
\newtheorem{algorithm}[lemma]{Algorithm}
\newtheorem{corollary}[lemma]{Corollary}
\newtheorem{conjecture}{Conjecture}

\theoremstyle{definition}
\newtheorem{definition}[theorem]{Definition}

\theoremstyle{remark}
\newtheorem{remark}[theorem]{Remark}

\newtheorem{acknowledgment}{Acknowledgement}

\title[]{Perfect codes over non-prime power alphabets: an approach based on Diophantine equations}

\author{Pedro-Jos\'{e} Cazorla Garc\'{i}a}

\address{Department of Mathematics, University of Manchester, Manchester, United Kingdom, M13 9PL}
\email{pedro-jose.cazorlagarcia@manchester.ac.uk}

\date{\today}

\begin{abstract}
Perfect error correcting codes allow for an optimal transmission of information while guaranteeing error correction. For this reason, proving their existence has been a classical problem in both pure mathematics and information theory. Indeed, the classification of the parameters of $e-$error correcting perfect codes over $q-$ary alphabets was a very active topic of research in the late 20th century. Consequently, all parameters of perfect $e-$error correcting codes were found if $e \ge 3$, and it was conjectured that no perfect $2-$error correcting codes exist over any $q-$ary alphabet, where $q > 3$. In the 1970s, this was proved for $q$ a prime power, for $q = 2^r3^s$ and for only $7$ other values of $q$. Almost $50$ years later, it is surprising to note that there have been no new results in this regard and the classification of $2-$error correcting codes over non-prime power alphabets remains an open problem. In this paper, we use techniques from the resolution of generalised Ramanujan--Nagell equation and from modern computational number theory to show that perfect $2-$error correcting codes do not exist for $172$ new values of $q$ which are not prime powers, substantially increasing the values of $q$ which are now classified. In addition, we prove that, for any fixed value of $q$, there can be at most finitely many perfect $2-$error correcting codes over an alphabet of size $q$.
\end{abstract}

\keywords{Error correcting codes, $2-$perfect codes, Hamming bound, Ramanujan--Nagell equations, Mordell curves}
\subjclass[2010]{Primary 94B65, 11D61, Secondary 11G05, 11G50, 14G05}

\maketitle

\section{Introduction}

\subsection{Background}
Error correcting codes have been widely studied since the 1940s. Apart from their intrinsic mathematical interest, this is due to the fact that error correcting codes have many useful applications for information transmission and engineering. For example, in \cite{IBM} the authors discuss possible applications of error-correcting codes to semiconductor memory, while the classical article by Shannon \cite{Shannon} introduced error-correcting codes as a way to transmit information over noisy channels.


While these applications are one of the main reasons why error correcting codes are studied, the developments in this paper will be theoretical. Throughout the article, we shall consider the following standard definition of an error correcting code.

\begin{definition}    
    Let $q \ge 2$, $e \ge 1$ be positive integers, and let $Z_q$ be a set with $q$ elements, which we shall call the \emph{alphabet}. An \emph{$e-$error correcting code} $C$ with parameters $(n, M)$ is a subset of $Z_q^n$ of size $M$ and such that any two elements $\overline{w}_1, \overline{w}_2 \in C$ differ in at least $2e+1$ positions.
\end{definition}

It is a standard fact in coding theory (for example, see \cite[Theorem 2.16]{Hill}) that any $e-$error correcting code with parameters $(n, M)$ over an alphabet of size $q$ satisfies the \emph{sphere packing bound} or \emph{Hamming bound}:
\begin{equation}
    \label{eqn:inequality}
M\left(\sum_{j = 0}^e {\binom{n}{j}}(q-1)^j\right) \le q^n.
\end{equation}
Let us briefly explain why this inequality holds. For this, let us define the set
\begin{equation}
    \label{eqn:eq1}
B(\overline{w}, d) = \{\overline{w}_2 \in C \mid \overline{w} \text{ and } \overline{w}_2 \text{ differ in at most } d \text{ positions.}\},
\end{equation}
where $\overline{w} \in C$ is fixed and $d \ge 0$ is an integer. It is an easy combinatorial exercise to see that 
\begin{equation}
    \label{eqn:eq2}
|B(\overline{w}, d)| = \sum_{j = 0}^d {\binom{n}{j}}(q-1)^j. 
\end{equation}

In order for a code to be an $e-$error correcting code, it follows that the sets $B(\overline{w}, e)$ and $B(\overline{w}', e)$ are disjoint for $\overline{w} \neq \overline{w}'$, and therefore, we have that
\begin{equation}
    \label{eqn:eq3}
\left|\bigcup_{\overline{w} \in C} B(\overline{w}, e)\right| = M\left(\sum_{j = 0}^d {\binom{n}{j}}(q-1)^j\right). 
\end{equation}
Since there are precisely $q^n$ elements in $Z_q^n$, inequality \eqref{eqn:inequality} follows. In the case where \eqref{eqn:inequality} is an equality, we say that the code is \textbf{perfect}. In other words, $C$ is perfect if and only if $n$ and $M$ satisfy the following Diophantine equation:
\begin{equation}
    \label{eqn:perfectcondition}
    M = {q^n}\left/\sum_{j = 0}^e {\binom{n}{j}}(q-1)^j\right..
\end{equation}
Perfect codes are optimal for information transmission, in the sense that they allow for the maximum possible number of words while being able to correct $e$ errors. In other words, perfect codes are the most expressive $e-$error correcting codes that are theoretically possible, and it is desirable to know whether they exist for a fixed value of $q$ and $e$.

For these reasons, the classification of perfect codes was a very active topic of research in the late 20th century, and this led to the production of a vast amount of literature on the topic. The most relevant works are summarised in Table \ref{tab:comparison} and we refer the reader to \cite{survey} for a detailed survey on the history of perfect codes.

\begin{table}[!ht]
    \centering 
    \begin{tabular}{cccc}
    \toprule 
    Reference & $q$ & $e$ & Main results \\ \midrule 
    \makecell[c]{Tietäväinen \cite{primepowers}, and \\Leontiev and Zinoniev \cite{primepowers2}  \\(independently)} & \makecell{Prime \\ power} & $\ge 1$& \makecell{All perfect codes over prime power \\ alphabets are either Hamming \\ codes or have parameters $(n, M, $ \\ $e, q) = (11, 3^6, 2, 3), (23, 2^{12}, 3, 2)$.} \\
    \midrule 
    Reuvers \cite{Reuvers} & \multirow{5}{*}{\makecell{Non-\\prime \\ power}} & 3, 4, 5 & \multirow{5}{*}{\makecell{There are no perfect $e-$error \\correcting codes over a $q$-ary \\ alphabet.}} \\
    & & & \\
    Best \cite{Best} & & \makecell{7, $\ge 9$} & \\
    & & & \\
    Hong \cite{Hong} & & \makecell{$6$, $8$} \\
    \midrule 
    Reuvers \cite{Reuvers} & \makecell{6, 15, 21, 22, \\ 26, 30, 35} & \multirow{5}{*}{2} & \multirow{5}{*}{\makecell{There are no perfect $e-$error \\correcting codes over a $q$-ary \\ alphabet.}} \\
    & & & \\
    van Lint \cite{vanLint} & 10 & & \\
    & & & \\
    Bassalygo et al. \cite{Bassalygo} & $2^r\cdot 3^s$ & & \\
    \midrule
    \end{tabular}
    \caption{Summary of existing results on perfect $e-$error correcting codes over $q-$ary alphabets}
    \label{tab:comparison}
\end{table}
From Table \ref{tab:comparison}, we immediately notice that perfect codes do exist if $q$ is a prime power, and are either Hamming codes with parameters 
\begin{equation}
    \label{eqn:eq4}
(n, M, e, q) = \left(\frac{q^r-1}{q-1}, \frac{q^r-1}{q-1}-r-1, 1, q\right),
\end{equation}
for any $r \ge 2$, or are \emph{Golay codes} with parameters
\begin{equation}
    \label{eqn:eq5}
(n, M, e, q) = (11, 3^6, 2, 3), (23, 2^{12}, 3, 2).
\end{equation}
This was proved by Tietäväinen \cite{primepowers}, and independently by Leontiev and Zinoniev \cite{primepowers2}. Therefore, if $q$ is a prime power, perfect codes do exist and are completely classified for any value of $e \ge 1$.

However, as it is apparent from Table \ref{tab:comparison}, the situation when $q$ is not a prime power is quite different. In this case, all the existing results are negative for any values of $e$ and $q$. This has led many authors to conjecture (see for example \cite[Section 5]{survey}) the following:
\begin{conjecture}
    \label{conj:nomore}
    Let $q \ge 6$ be a non-prime power. Then, there are no perfect error correcting codes over $Z_q$.
\end{conjecture}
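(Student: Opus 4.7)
The plan is to split the conjecture by the error-correction capacity $e$. For $e \ge 3$, the conjecture is already established by the combined work of Reuvers, Best, and Hong summarised in Table \ref{tab:comparison}, so it remains to settle $e \in \{1, 2\}$.

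For $e = 1$, equation \eqref{eqn:perfectcondition} reads $M(1 + n(q-1)) = q^n$, so that $1 + n(q-1)$ must be supported on the primes dividing $q$. Writing $q = p_1^{a_1}\cdots p_k^{a_k}$ with $k \ge 2$ (since $q$ is not a prime power), this becomes an $S$-unit equation $p_1^{b_1}\cdots p_k^{b_k} = 1 + n(q-1)$. First I would apply Baker-type lower bounds for linear forms in logarithms to obtain an effective upper bound $n \le C(q)$, and then for each surviving pair $(n, M)$ I would rule out the existence of a perfect $1$-covering of $Z_q^n$ by combinatorial arguments specific to the candidate parameters (for instance, a parity/counting obstruction on the coordinate projections). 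Obtaining a treatment uniform in $q$ faces the same obstacle as the $e = 2$ case below.

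For $e = 2$, which is the focus of this paper, completing the square in \eqref{eqn:perfectcondition} yields the generalised Ramanujan--Nagell equation
\begin{equation*}
Y^2 = 8\, p_1^{b_1}\cdots p_k^{b_k} + (q-3)^2 - 8,\qquad Y = 2n(q-1) - (q-3),
\end{equation*}
where $p_1^{b_1}\cdots p_k^{b_k} = 1 + n(q-1) + \binom{n}{2}(q-1)^2$ must divide $q^n$. For each fixed $q$ this has finitely many solutions, computable via $S$-integral points on an associated family of Mordell curves, as developed in this paper. To upgrade this to a uniform statement over all non-prime-power $q$, the plan is: (i) for a small fixed collection of auxiliary primes $\ell_1,\dots,\ell_t$, catalogue the residue classes of $q$ modulo $\prod \ell_j$ for which the right-hand side fails to be a square modulo some $\ell_j$ independently of the exponents $b_i$; (ii) show that the non-prime-powers $q$ not covered by any such local obstruction form a sparse set amenable to direct enumeration; (iii) treat the remaining finitely many values of $q$ using the explicit Mordell curve computations of the present paper.

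The main obstacle is step (ii): producing a finite list of auxiliary primes whose combined congruence obstructions cover every non-prime-power $q \ge 6$. Heuristically the density of uncovered $q$ should tend to zero, but turning this into a rigorous statement appears to require an unconditional polynomial-in-$q$ upper bound on the $\{p_1,\dots,p_k\}$-smooth part of $1 + n(q-1) + \binom{n}{2}(q-1)^2$, which is currently out of reach. A conditional proof assuming the $abc$-conjecture should, however, be within scope: $abc$ would immediately yield $n \ll_{\varepsilon} q^{\varepsilon}$ for any $\varepsilon > 0$, and combined with the Hamming bound constraint this should force a contradiction for all sufficiently large $q$, leaving only a finite set of small $q$ to be handled by the computational methods developed here.
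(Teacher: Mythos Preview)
The statement you are attempting to prove is \emph{Conjecture}~\ref{conj:nomore}, and the paper does not prove it. The paper explicitly presents it as an open problem and offers only partial evidence: Theorem~\ref{thm:main} settles $e=2$ for a finite list of $q$'s, and Theorem~\ref{thm:finitelymany} shows finiteness for fixed $q$. There is therefore no ``paper's own proof'' to compare against.

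Your proposal is not a proof either, and you say so yourself. For $e=1$ you reduce to an $S$-unit constraint and then defer to unspecified ``combinatorial arguments specific to the candidate parameters''; even the finiteness step via Baker bounds is only for fixed $q$, so you have not advanced past the known state of affairs (and as the paper notes, almost nothing is known for $e=1$ over non-prime-power alphabets). For $e=2$ your plan hinges on step~(ii), which you correctly identify as out of reach: there is no known way to produce a finite set of auxiliary primes whose local obstructions eliminate all but finitely many non-prime-power $q$, and the paper explains in Section~\ref{Sec:limitations} why the Mordell-curve reduction breaks down once $q$ varies (the constant term $D_q$ moves, so one is faced with an infinite family of curves). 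Your conditional $abc$ sketch is also incomplete: even granting $n \ll_\varepsilon q^\varepsilon$, you still need to exclude, for every large $q$, the finitely many surviving $n$, and you have given no mechanism (Lloyd-type or otherwise) that works uniformly.

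In short: the paper makes no claim to prove this conjecture, and your proposal is a research programme with its central difficulty left open, not a proof.
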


In Table \ref{tab:comparison}, we see that the combination of the works of Reuvers \cite{Reuvers}, Best \cite{Best} and Hong \cite{Best} successfully proves Conjecture \ref{conj:nomore} for $e-$error correcting codes, where $e \ge 3$. For $e \le 2$, the number of results is much more scarce, and almost non-existent for $e=1$, as remarked by Heden in \cite[Section 2]{survey}. 

For $e=2$, as shown in the last rows of Table \ref{tab:comparison}, the existing results are due to Reuvers \cite{Reuvers}, van Lint \cite{vanLint} and Bassalygo et al. \cite{Bassalygo}. Together, they show that perfect $2-$error correcting codes do not exist for
\begin{equation}
    \label{eqn:eq6}
q = 6, 10, 15, 21, 22, 26, 30, 35 \quad \text{and} \quad q = 2^s3^r,
\end{equation}
where $r, s > 0$ are positive integers. We remark that the methods used in all these papers are specific to the cases at hand and cannot be generalised to other values of $q$. This is probably the reason why there have been no new results in the area in more than $50$ years. 

Indeed, we believe that the great difficulty of dealing with the case where $q$ is not a prime power, as well as the lack of a general methodology for these cases justifies the scarcity of literature. 

To illustrate this, we note that there is a significant amount of modern articles which aim to generalise the results in Table \ref{tab:comparison} to extended codes or to different metrics, but they always treat the case when $q$ is a prime power. For example, under this assumption Li and Xing \cite{quantum} classify perfect quantum codes and Gubitosi, Portela and Qureshi \cite{NRT} show analogous results for the Niederreiter-Rosenbloom-Tsfasman metric. 

In conclusion, we identify a clear gap in the literature in the case $e=2$, since, apart from the family studied in \cite{Bassalygo}, there are only $7$ values of $q$ which are not prime powers and for which non-existence has been proved, and there have been no new results in half a century. In addition, the techniques used in the proofs cannot be generalised, which makes it difficult to extend the results to different codes or metrics. 

In this paper, we address this gap in the literature by proving non-existence results for the case $e=2$ and 172 new values of $q$, therefore substantially increasing the evidence towards Conjecture \ref{conj:nomore}. In addition, our methods are theoretically valid for all values of $q$ and the only obstructions that we encounter are of a computational nature. Consequently, it seems feasible to generalise them to other problems, and we shall comment more on this in Section \ref{Sec:futurelines}.

\subsection{The main results} 

For the rest of the paper, we let $q \ge 6$ be a positive integer which is not a prime power. In this paper, we will prove new results related to the non-existence of perfect $2-$error correcting codes over alphabets of size $q$.


Since our methodology applies for $q= 6, 15, 21, 22, 26, 30, 35$, for $q=10$, and for some cases where $q=2^r\cdot 3^s$, we shall recover the aforementioned results by Reuvers \cite{Reuvers}, van Lint \cite{vanLint} and Bassalygo et al \cite{Bassalygo}. Therefore, our findings are consistent with the previously existing literature.

Our main result is the following, which covers all but two values of $q \le 200$, as well as some values of $q$ in the interval $q \in [201,600]$ subject to certain additional restrictions.


\begin{theorem}
    \label{thm:main}
    Let $q \ge 6$ be a positive integer which is not a prime power, satisfying any of the following conditions. 

    \begin{enumerate}
        \item $q \le 200$ and $q \neq 94, 166$.
        \item $q \le 600$ and all prime divisors of $q$ are contained in the set $\{2, 3, 5, 7, 11\}$.
    \end{enumerate}
    Then, there are no perfect $2-$error correcting codes over alphabets of size $q$.
\end{theorem}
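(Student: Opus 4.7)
The plan is to translate the perfect code condition for $e=2$ into a Diophantine equation and then reduce it, for each fixed $q$, to a finite list of Mordell curves $Y^{2} = X^{3} + k$, for which the integral points can be computed. Specialising \eqref{eqn:perfectcondition} to $e = 2$ and setting $N := 1 + n(q-1) + \binom{n}{2}(q-1)^{2}$, a short computation (substituting $u = n(q-1)$ and completing the square) yields
\[
8N \;=\; \bigl(2n(q-1) - (q-3)\bigr)^{2} - (q^{2} - 6q + 1).
\]
Introducing $x := 2n(q-1) - (q-3)$ and $D := q^{2} - 6q + 1$, the existence of a perfect $2$-code is equivalent to
\[
x^{2} - D \;=\; 8N, \qquad N \,\bigm|\, q^{n},
\]
so every prime factor of $N$ must lie in the finite set $S$ of prime divisors of $q$. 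This is a generalised Ramanujan--Nagell equation in the variables $x$ and $N$.

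\textbf{Reduction to Mordell curves.} Writing $N = \prod_{p \in S} p^{a_{p}}$ and splitting each exponent as $a_{p} = 3 b_{p} + r_{p}$ with $r_{p} \in \{0,1,2\}$ gives, for each residue vector $(r_{p})_{p \in S}$,
\[
x^{2} - D \;=\; c\, z^{3}, \qquad c := 8 \prod_{p \in S} p^{r_{p}}, \qquad z := \prod_{p \in S} p^{b_{p}}.
\]
Multiplying through by $c^{2}$ converts this into the Mordell equation
\[
Y^{2} \;=\; X^{3} + c^{2} D, \qquad X := c z,\ \ Y := c x.
\]
Hence any perfect $2$-code for a given $q$ forces a specific integral point, of very restricted shape, on one of at most $3^{|S|}$ Mordell curves. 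For $q \le 200$ one has $|S| \le 3$ (since the smallest integer with four distinct prime divisors is $2 \cdot 3 \cdot 5 \cdot 7 = 210$), while under the hypothesis of part (2) we have $|S| \le 4$, so only a modest number of curves arises per value of $q$.

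\textbf{Resolution and expected obstacle.} For each curve in the resulting list I would compute the complete set of integral points using standard tools, e.g.\ the $S$-integral point routines implemented in \textsc{Magma} or \textsc{Sage}, or by consulting the LMFDB for Mordell curves of small conductor. From each integral point $(X, Y)$ one recovers at most one candidate pair $(n, N)$ via $z = X/c$ and $x = Y/c$, after which a short battery of sieves must be applied: that $z$ is a positive integer whose prime factorisation has exactly the prescribed residues $r_{p}$ modulo $3$, that $x \equiv -(q-3) \pmod{2(q-1)}$ with the resulting $n$ a positive integer with $n \ge 2e+1 = 5$, and that the reconstructed $N$ actually divides $q^{n}$. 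The expectation is that no candidate survives all of these tests for the values of $q$ in the theorem, yielding non-existence. The principal difficulty is computational rather than conceptual: the discriminant $c^{2} D$ can become so large that current algorithms fail to return a provably complete list of integral points within reasonable time. I anticipate that this is exactly what occurs for the two excluded values $q = 94 = 2 \cdot 47$ and $q = 166 = 2 \cdot 83$, where a large prime factor of $q$ inflates $c^{2} D$ dramatically; conversely, the restriction in part (2) to primes from $\{2,3,5,7,11\}$ simultaneously bounds $|S|$ and keeps $c^{2} D$ within the tractable range, allowing the method to extend up to $q = 600$.
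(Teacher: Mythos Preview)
Your reduction is essentially the paper's: the same completion of the square (your $D$ is the paper's $-D_q$), the same generalised Ramanujan--Nagell equation, and the same passage to finitely many Mordell curves by taking exponents modulo $3$. The explanation for the exclusions $q=94,166$ and for the smoothness restriction in part~(2) is also in line with what actually happens.

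There is, however, a genuine gap. Your final step is the expectation that ``no candidate survives all of these tests''. That expectation is false. For $(q,n)=(15,11)$, $(21,52)$ and $(46,93)$ one has
\[
N=3^{7}\cdot 5,\qquad N=3^{12},\qquad N=2^{14}\cdot 23^{2},
\]
respectively, so in each case $N\mid q^{n}$, the congruence $x\equiv -(q-3)\pmod{2(q-1)}$ holds with $n\ge 5$, and the recovered $z$ is a positive $S$-unit with the correct residue pattern. All three pass every sieve you list, because your sieves are exactly the conditions equivalent to the sphere-packing identity \eqref{eqn:perfectcondition} with $e=2$; they cannot exclude a triple that genuinely satisfies that identity.

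The missing ingredient is a further necessary condition for the existence of a perfect code that is strictly stronger than the packing identity. The paper invokes Lloyd's theorem: for a perfect $2$-code of length $n$ over a $q$-ary alphabet, the quadratic
\[
L_{2}(x)=\tfrac{(q-1)^{2}}{2}(n-x)(n-x-1)-(q-1)(x-1)(n-x)+\tfrac{1}{2}(x-1)(x-2)
\]
must have two distinct integer roots in $[1,n]$. A direct check shows this fails for each of the three surviving pairs $(q,n)$ above, and that is what finishes the proof. Without this (or an equivalent obstruction), your argument does not close.
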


The values of $q$ in the theorem are optimal given the current existing computational techniques and the methods used in this paper. We will comment more on this in Section \ref{Sec:limitations}.

The second result that we present applies to all values of $q$, but, instead of showing non-existence of perfect codes, it shows that there can only be finitely many perfect $2-$error correcting perfect codes over a $q-$ary alphabet. This is consistent with Conjecture \ref{conj:nomore}, albeit much weaker.

\begin{theorem}
    \label{thm:finitelymany}
    Let $q \ge 6$ be a positive integer which is not a prime power. Then, there are at most finitely many perfect $2-$error correcting codes over an alphabet of size $q$.
\end{theorem}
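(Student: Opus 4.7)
The plan is to reduce Theorem \ref{thm:finitelymany} to a finiteness statement for a generalised Ramanujan--Nagell equation, to which classical results in Diophantine number theory apply.

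First I would write the perfect code condition explicitly. Specialising \eqref{eqn:perfectcondition} to $e=2$, a perfect $2$-error correcting code with parameters $(n,M)$ over an alphabet of size $q$ corresponds to positive integers $n,M$ with
$$M\left(1+n(q-1)+\binom{n}{2}(q-1)^2\right)=q^n.$$
Writing $S(n):=1+n(q-1)+\binom{n}{2}(q-1)^2$, this forces $S(n)\mid q^n$, so every prime divisor of $S(n)$ must divide $q$. Since $n$ determines $M$, it suffices to show that there are only finitely many admissible $n$.

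Next I would complete the square to put the relation in a familiar shape. A direct computation yields
$$8\,S(n)=\bigl(2n(q-1)-(q-3)\bigr)^2-\bigl((q-3)^2-8\bigr).$$
Setting $X:=2n(q-1)-(q-3)$ and $D:=(q-3)^2-8$, one obtains
$$X^2-D=8\,S(n),$$
where $D$ is a fixed nonzero integer (since $(q-3)^2\ne 8$ for any integer $q\ge 6$) and the right-hand side is a positive integer supported on the finite set of primes $P:=\{2\}\cup\{p : p\mid q\}$.

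To finish, I would invoke the classical finiteness theorem for generalised Ramanujan--Nagell equations: for a fixed nonzero integer $D$ and a fixed finite set of primes $P$, the equation
$$X^2-D=\prod_{p\in P}p^{a_p}$$
has only finitely many solutions in $X\in\mathbb{Z}$ and nonnegative exponents $(a_p)_{p\in P}$. This is derivable from Mahler's $p$-adic analogue of Thue's theorem, or more directly from Baker-type bounds for linear forms in ordinary and $p$-adic logarithms. Applied to our situation it bounds $|X|$, hence $n$, yielding the claim.

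The main obstacle is not the finiteness itself, which is a well-established classical result, but rather picking a convenient formulation and verifying that its hypotheses apply in our setting; here the only nontrivial check is $D\ne 0$, which is immediate. It is worth noting that for the companion Theorem \ref{thm:main} the difficulty shifts radically: the effective constants produced by linear forms in logarithms are typically astronomical and do not permit direct enumeration, so quantitative refinements or reductions to explicit Mordell curves are required. For the purely qualitative statement of Theorem \ref{thm:finitelymany}, however, the ineffective finiteness result is amply sufficient.
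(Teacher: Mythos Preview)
Your proposal is correct and follows essentially the same approach as the paper: the identical completion of the square yields the generalised Ramanujan--Nagell equation $X^2 - D = 8\,S(n)$ with right-hand side $S$-units, and finiteness is then invoked from the literature. The only minor differences are that the paper cites von K\"anel--Matschke's Corollary~K rather than Mahler/Baker, and it additionally supplies an alternative self-contained argument via reduction to finitely many Mordell curves $E_d: Y^2 = X^3 - d^2 b$ together with Siegel's theorem---a reduction which is then reused for the effective computations in Theorem~\ref{thm:main}.
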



In order to prove Theorems \ref{thm:main} and \ref{thm:finitelymany}, we will study the perfect code condition \eqref{eqn:perfectcondition} with $e=2$. We shall see that, for a fixed value of $q$, it can be reduced to a Ramanujan--Nagell type equation, which we can solve effectively with the use of several techniques from computational number theory. 

The history of Ramanujan--Nagell type equations is rich and has motivated a vast amount of research in the number theory community ever since Ramanujan conjectured that the Diophantine equation
\begin{equation}
    \label{eqn:RNintro}
x^2 + 7 = 2^n,
\end{equation}
only had solutions for $n = 3, 4, 5, 7$ and $15$. This was proved by Mordell in 1948 \cite{Mordell} and led to many researchers considering generalisations of \eqref{eqn:RNintro}. We refer the reader to \cite{RNsurvey} for a detailed survey of generalisations of the Ramanujan--Nagell equation.

In order to prove Theorems \ref{thm:main} and \ref{thm:finitelymany}, we shall solve a Ramanujan--Nagell type equation. If, after resolving the equation, there are any solutions $(n, M)$ to \eqref{eqn:perfectcondition}, we can use Lloyd's theorem to prove that they cannot constitute a perfect code.

The organisation of this paper is as follows. In Section \ref{Sec:RN}, we will reduce the Hamming bound to a Ramanujan--Nagell equation and prove Theorem \ref{thm:finitelymany}. In Section \ref{Sec:resolve}, we will build upon the work of von Känel and Matschke \cite{equations} in order to obtain all solutions to the previously obtained Ramanujan--Nagell equations. In Section \ref{Sec:Lloyd}, we will show that the outstanding solutions cannot form perfect codes by using Lloyd's theorem, and therefore prove Theorem \ref{thm:main}. In Section \ref{Sec:limitationswork}, we shall discuss complications arising from our methodology, as well as potential future lines of work. Finally, in Section \ref{Sec:conclusions}, we will briefly summarise our findings and explain how they overcome the shortcomings in the literature.

All computations in this paper have been carried out with the use of \texttt{Magma} \cite{Magma} code. The code is available for the reader in the author's GitHub repository (\href{https://github.com/PJCazorla/perfect-q-ary-codes}{https://github.com/PJ\-Cazorla/perfect-q-ary-codes}) and can be used to check the correctness of the computations.

\section{The Hamming bound and Ramanujan--Nagell type equations}
\label{Sec:RN}
We suppose that there exists a perfect 2 error-correcting $q-$ary code with 
parameters $(n, M)$. In order to avoid trivial codes, we shall assume that $n \ge 5$ and $M \ge 2$. 
Then, it follows from the perfect code condition \eqref{eqn:perfectcondition} with $e=2$ that
\begin{equation}
    \label{eqn:Hamming1}
\left(1 + n(q-1)+\frac{n(n-1)}{2}(q-1)^2\right)M = q^n.
\end{equation}
Let $q_1, \dots, q_k$ denote the prime divisors of $q$. By \eqref{eqn:Hamming1}, $M \mid q^n$ and, consequently, there is a solution $(n, n_1, \dots, n_k) \in \Z^{k+1}$ to the Diophantine equation
\begin{equation}
    \label{eqn:Hamming2}
1 + n(q-1)+\frac{n(n-1)}{2}(q-1)^2 = q_1^{n_1}\dots q_k^{n_k},
\end{equation}
with $n \ge 5$ and where $q \ge 6$ is fixed. Now, let us introduce some notation. Let $x$ and $D_q$ be given by the following expressions:
\begin{equation}
    \label{eqn:definitions}
x = 2n(q-1) + 3 - q \quad \text{and} \quad D_q = 8-(q-3)^2.
\end{equation}
Multiplying \eqref{eqn:Hamming2} by 8 and completing the square, we see that the existence of a perfect 2-error correcting code over a $q-$ary alphabet implies that there exists a solution $(x, n_1, \dots, n_k) \in \Z^{k+1}$ to the Diophantine equation
\begin{equation}
    \label{eqn:RN}
x^2 + D_q = 8q_1^{n_1}\dots q_k^{n_k}.
\end{equation}
Let $A$ denote the set of positive integers supported only on $\{q_1, \dots, q_k\}$, so that 
\begin{equation}
    \label{eqn:eq7}
A = \{q_1^{\alpha_1}\dots q_k^{\alpha_k} \mid \alpha_1, \dots, \alpha_k \ge 0\}.
\end{equation}
Clearly, the Diophantine equation \eqref{eqn:RN} can be rewritten as 
\begin{equation}
    \label{eqn:generalisedRN}
x^2 + b = cy, 
\end{equation}
where $x \in \Z$, $y \in A$, $b = D_q$ and $c= 8$. This is an example of a \textbf{generalised Ramanujan--Nagell equation}, as considered by von Känel and Matschke in \cite{equations}. We can immediately use their work to prove Theorem \ref{thm:finitelymany}.

\begin{proof}[Proof of Theorem \ref{thm:finitelymany}]
By \cite[Corollary K]{equations}, it follows that there are at most finitely many solutions to any generalised Ramanujan--Nagell equation such as \eqref{eqn:generalisedRN} and, consequently, there are at most finitely many solutions to \eqref{eqn:RN}, showing that there can only be finitely many perfect $2-$error correcting $q-$ary codes for a fixed value of $q$.

For the convenience of the reader, we shall give a different proof of the fact that \eqref{eqn:generalisedRN} has finitely many solutions. This proof will allow us to fix notation and will be insightful for our work in Section \ref{Sec:resolve}. 

Without loss of generality, we can assume that $\gcd(c, q_1\dots q_k) = 1$. This is because any powers of $q_1, \dots, q_k$ present in $c$ can be absorbed into $y$ without altering the number of solutions of \eqref{eqn:generalisedRN}. Then, we can write 
\begin{equation}
    \label{eqn:defs1}
c = c_0c_1^3 \quad \text{and} \quad y = y_0y_1^3,
\end{equation}
where $c_0, c_1, y_0, y_1 \in \Z$ with $c_0$ and $y_0$ cubefree. We note here that 
\begin{equation}
    \label{eqn:finitelymanyd}
c_0y_0 \in \{c_0q_1^{a_1}\dots q_k^{a_k} \mid 0 \le a_1, \dots, a_k \le 2 \}.
\end{equation}
We define $d$ and $z$ by
\begin{equation}
    \label{eqn:defs2}
d = c_0y_0 \quad \text{ and } \quad z = c_1y_1,
\end{equation}
so that \eqref{eqn:generalisedRN} can be rewritten as 
\begin{equation}
    \label{eqn:eq8}
x^2 + b = dz^3.
\end{equation}
Let $(x, z) \in \Z^2$ be a solution to the previous equation and let $(X,Y)$ be given by
\begin{equation}
    \label{eqn:defs3}
X = dz \quad \text{and} \quad Y = dx. 
\end{equation}
Then, $(X,Y)$ is an integral point on the Mordell curve
\begin{equation}
    \label{eqn:Mordell}
E_d: Y^2 = X^3 - d^2b. 
\end{equation}
By the work of Siegel \cite{Siegel}, these curves have finitely many integral points. In addition, since the set in \eqref{eqn:finitelymanyd} is finite, there are only finitely many curves $E_d$ to consider. Consequently, there are at most finitely many solutions to \eqref{eqn:generalisedRN}, finishing the proof.
\end{proof}


From the definition of $x$ and $D_q$ given in \eqref{eqn:definitions}, we note that $x$ and $D_q$ are even precisely when $q$ is odd. Therefore, we can define
\begin{equation}
    \label{eqn:eq9}
x' = \frac{x}{2} \quad \text{and} \quad D'_q = \frac{D_q}{2}.
\end{equation}
In these cases, we can divide \eqref{eqn:RN} by $4$ and obtain the following equation, which, computationally, is slightly easier to solve:
\begin{equation}
   \label{eqn:RN2}
(x')^2 + D'_q = 2q_1^{n_1}\dots q_k^{n_k}.
\end{equation}
If we let $b = D'_q$ and $c = 2$, this is another instance of a generalised Ramanujan--Nagell equation in the form of \eqref{eqn:generalisedRN}. We shall present an algorithm for resolving these equations in Section \ref{Sec:resolve}.


\section{Resolving the Diophantine equations}
\label{Sec:resolve}
As we explained in Section \ref{Sec:RN}, the existence of a perfect $2-$error correcting code over a $q-$ary alphabet implies the existence of a solution $(x,y)$ to the generalised Ramanujan--Nagell equation \eqref{eqn:generalisedRN}. In this section, we shall present an algorithm to solve this type of equations. Our algorithm is based on 
\cite[Algorithm 6.2]{equations}.

As we discussed in the proof of Theorem \ref{thm:finitelymany}, we can reduce the resolution of \eqref{eqn:generalisedRN} to the determination of all integral points $(X,Y)$ on a finite number of Mordell curves $E_d$. Then, by \eqref{eqn:defs1}, \eqref{eqn:defs2} and \eqref{eqn:defs3}, we may recover the original solutions $x$ and $y$ via the expressions
\begin{equation}
    \label{eqn:eq10}
x = \frac{Y}{d} \quad \text{and} \quad y = \frac{d}{c_0}\left(\frac{X}{c_1d}\right)^3.
\end{equation}
In order to compute the integral points on the Mordell curves $E_d$, we distinguish two cases. Following \cite[Section 4.1]{equations}, we define $a_S$ by 
\begin{equation}
    \label{eqn:eq11}
a_S = 1728\prod_{p} p^{\min\{\text{ord}_p(bd^2), 
 2\}},
\end{equation}
where the product is taken over all primes $p$ and $\text{ord}_p(.)$ denotes the standard $p-$adic valuation. If $a_S < 500,000$, we can use \cite[Algorithm 4.2]{equations} to compute all integral points very efficiently. This restriction is due to the fact that the algorithm requires the computation of all elliptic curves of conductor $N \mid a_S$. If $a_S < 500,000$, these curves have been computed by Cremona \cite{Cremona} and so the necessary computational effort is minimal.

In the case where $a_S \ge 500,000$, our code computes the integral points by using \texttt{Magma}'s function \textbf{IntegralPoints}, which is based upon the use of linear forms in complex and $p$-adic elliptic logarithms (for a reference, see \cite{IntegralPoints}). In this case, the main computational difficulty lies in the fact that a Mordell-Weil basis for $E_d$ needs to be computed which, in general, is very hard. We introduce two tricks to mitigate this difficulty.

Firstly, we note that the question of determining the rank $r$ of $E_d$ is highly non-trivial. However, by the work of Kolyvagin \cite{Kolyvagin}, we know that if the analytic rank of an elliptic curve is $0$ or $1$, its algebraic rank coincides with its analytic rank. Since computing the analytic rank is much easier computationally than computing the algebraic rank, there are substantial computational savings in these cases.

We also remark that if $r=0$, there is no need to compute any generators to the Mordell-Weil group. Similarly, if $r=1$, a generator can be found by computing Heegner points, as shown by Gross and Zagier \cite{Heegner}. We implement these two improvements in our code. While it could seem that these are not significant for the computation, they actually help tremendously and allow us to solve many equations which would otherwise not be amenable to our techniques.

Alternatively, we could have used the work of Bennett and Ghadermarzi \cite{BennettGha}, who computed all integral points on the Mordell curves
\begin{equation}
    \label{eqn:eq12}
E_k: y^2 = x^3 + k,
\end{equation}
where $0 < |k| \le 10^7$. They use a different set of techniques, involving the resolution of cubic Thue equations. This entrails a similar level of computational complexity to our approach, and it does not introduce any substantial computational improvements.





With the presented algorithm, we are able to solve the generalised Ramanujan--Nagell equations and recover the values of $n$ and $M$ which could represent a perfect $2-$error correcting $q-$ary code. This is the content of the following lemma.

\begin{lemma}
    \label{lemma:solutions}
    Let $q \ge 6$ be a positive integer which is not a prime power {satisfying either of the two conditions in Theorem \ref{thm:main}}. Then, all solutions $(n, M)$ to \eqref{eqn:Hamming1} with $n \ge 5$ are given in Table \ref{tab:solutions}.
\end{lemma}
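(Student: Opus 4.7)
The plan is to carry out, for every $q$ satisfying one of the two conditions in Theorem \ref{thm:main}, exactly the algorithm sketched in Section \ref{Sec:resolve} and to collect all the surviving candidate pairs $(n, M)$. Concretely, for each such $q$ with prime support $\{q_1, \dots, q_k\}$, I would first pass from the Hamming-bound equation \eqref{eqn:Hamming1} to the generalised Ramanujan--Nagell equation \eqref{eqn:RN}, using the substitution \eqref{eqn:definitions}. When $q$ is odd I would use the reduced form \eqref{eqn:RN2} instead, since the smaller constant $c = 2$ produces smaller Mordell curves and noticeably lighter arithmetic.

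The second step is to enumerate the finite collection of cubefree values $d = c_0 y_0$ described by \eqref{eqn:finitelymanyd}. For each such $d$, I would form the Mordell curve $E_d : Y^2 = X^3 - d^2 b$ (with $b = D_q$ or $b = D'_q$) and compute all of its integral points. As explained in Section \ref{Sec:resolve}, the method depends on the invariant $a_S$ of \eqref{eqn:eq11}: when $a_S < 500{,}000$ I would use \cite[Algorithm 4.2]{equations} and the Cremona tables, and otherwise fall back on Magma's \textbf{IntegralPoints}. In the latter case I would exploit the two shortcuts already described, namely using Kolyvagin's theorem to read off the rank from the analytic rank whenever the analytic rank is $0$ or $1$, and building a generator of the Mordell--Weil group from a Heegner point in the rank-$1$ case, so that a full descent is needed only rarely.

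The third step is purely mechanical. For every integral point $(X, Y)$ on every $E_d$, I would recover $(x, y)$ through \eqref{eqn:eq10}, invert \eqref{eqn:definitions} to extract the candidate integer $n = (x + q - 3)/(2(q-1))$, read off $M = y \cdot 8/c$ (respectively $y \cdot 2/c$), and retain only the pairs $(n, M)$ with $n \ge 5$ and $M \ge 2$ that actually satisfy \eqref{eqn:Hamming1}. Aggregating these survivors over all $d$ and over all admissible $q$ produces Table \ref{tab:solutions}.

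The main obstacle is the large-$a_S$ regime, where the Mordell--Weil rank computation on $E_d$ can fail or time out. This is precisely what forces the exclusions in Theorem \ref{thm:main}: for $q = 94$ and $q = 166$, and for the values $q \in [201, 600]$ whose prime support escapes $\{2,3,5,7,11\}$, at least one of the Mordell curves $E_d$ arising in the procedure has an analytic rank exceeding $1$ and is too large for a direct rank certificate with current software, so that neither the Kolyvagin shortcut nor the Heegner-point trick applies. For all remaining $q$, however, each $E_d$ either falls into the Cremona-tables regime or admits a successful rank determination, and the entire computation reduces to a finite (though lengthy) Magma run whose transcript accompanies the paper.
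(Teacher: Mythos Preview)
Your proposal is essentially the paper's own proof: run the algorithm of Section~\ref{Sec:resolve} for each admissible $q$, recover $n$ via \eqref{eqn:eq13}, and read $M$ off from \eqref{eqn:Hamming1}. Two small slips worth fixing: first, your recovery formula for $M$ is wrong, since $y$ in \eqref{eqn:generalisedRN} equals the Hamming sum $q^n/M$, so one should take $M = q^n/y$ rather than $M = y\cdot 8/c$; second, the paper's stated reason for excluding $q=94$ is not that some $E_d$ has analytic rank exceeding $1$, but that the relevant rank-$1$ curve $E_{2\cdot 47^2}$ has conductor too large for the Heegner-point construction to be feasible.
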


\begin{proof}
    Using the algorithm that we have previously presented, we obtain all solutions $(x,y) \in \Z^2$ to each of the generalised Ramanujan--Nagell equations. By \eqref{eqn:definitions}, we can then recover $n$ as 
    \begin{equation}
        \label{eqn:eq13}
    n = \frac{x+q-3}{2(q-1)},
    \end{equation}
    and obtaining $M$ is immediate from \eqref{eqn:Hamming1}. All the solutions that we found are contained in Table \ref{tab:solutions}.
\end{proof}

\begin{table}[!ht]
    \centering 
    \begin{tabular}{c|cc}
    \hline 
    \toprule
    $q$ & $n$ & $M$ \\
    \midrule
    $15$ & $11$ & $3^4\cdot 5^{10}$ \\
    
    $21$ & $52$ & $3^{40}\cdot 7^{52}$ \\
     
    $46$ & $93$ & $2^{79}\cdot 23^{91}$ \\
    \bottomrule
    \end{tabular}
    \caption{Solutions to \eqref{eqn:Hamming1} with $n \ge 5$ and $q$ satisfying the conditions in Theorem \ref{thm:main}.}
    \label{tab:solutions}
\end{table}

\section{Dealing with the outstanding solutions: Lloyd's theorem}
\label{Sec:Lloyd}
In order to finish the proof of Theorem \ref{thm:main}, we need to show that none of the solutions in Table \ref{tab:solutions} can be parameters for a perfect $q-$ary code. For this purpose, we shall use Lloyd's Theorem, which was originally proved by Lloyd \cite{Lloyd} if $q$ is a prime power. The general case was proved independently by 
Bassalygo \cite{Lloyd2}, Delsarte \cite{Lloyd3} and Lenstra \cite{Lloyd4}. The statement of that theorem is as follows.

\begin{theorem}(Lloyd)
    \label{thm:lloyd}
    Let $n, e$ and $q$ be positive integers. Suppose that there exists a perfect $e-$error correcting code over a $q-$ary alphabet with word length $n$. Then, the polynomial
    \begin{equation}
    \label{eqn:lloyd}
    L_e(x) = \sum_{i=0}^e (-1)^i(q-1)^{e-i}\binom{x-1}{i}\binom{n-x}{e-i}
    \end{equation}
    has e distinct integer zeros in the interval $[1,n]$.
\end{theorem}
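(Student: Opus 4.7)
The plan is to follow the association-scheme approach due to Delsarte, which produces a uniform proof valid for every alphabet size $q$ (not merely prime powers). Consider the Hamming scheme on $Z_q^n$, whose Bose--Mesner algebra is generated by the distance matrices $A_0,A_1,\ldots,A_n$, where $(A_i)_{u,v}=1$ precisely when $u$ and $v$ differ in exactly $i$ coordinates. These matrices pairwise commute and simultaneously decompose $\mathbb{R}^{Z_q^n}$ into common eigenspaces $V_0,V_1,\ldots,V_n$; a standard computation (Fourier analysis on the underlying group in the prime-power case, direct intersection-number bookkeeping in general) identifies the eigenvalue of $A_i$ on $V_k$ with the Krawtchouk number
\begin{equation*}
K_i(k) \;=\; \sum_{j=0}^{i}(-1)^j(q-1)^{i-j}\binom{k}{j}\binom{n-k}{i-j}.
\end{equation*}

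Next I would encode the perfect-code hypothesis as a single matrix identity. Let $\chi_C\in\mathbb{R}^{Z_q^n}$ be the indicator vector of $C$. Perfectness means that the radius-$e$ balls centred at codewords partition $Z_q^n$, equivalently $B\chi_C=\mathbf{1}$ for $B:=A_0+A_1+\cdots+A_e$. Since $B$ acts on $V_k$ by the scalar $\lambda_k:=\sum_{i=0}^e K_i(k)$, decomposing $\chi_C=\sum_k u_k$ along the eigenspaces and using $\mathbf{1}\in V_0$ forces $\lambda_k u_k=0$ for every $k\ge 1$. Thus the set $S:=\{k\in\{1,\ldots,n\}:u_k\neq 0\}$ is contained in the zero set of the polynomial $k\mapsto\sum_{i=0}^e K_i(k)$.

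Finally I would bound $|S|$ from below and identify the resulting polynomial with $L_e$. Delsarte's linear-programming / MacWilliams duality for P-polynomial schemes (or, equivalently, the direct combinatorial argument of Bassalygo and Lenstra) shows that a code of minimum distance $2e+1$ has at least $e$ nonzero outer-distribution components outside $V_0$, hence $|S|\ge e$. A short computation using standard Krawtchouk identities (for instance, applying the generating function $\sum_{i\ge 0}K_i(k)z^i=(1-z)^k(1+(q-1)z)^{n-k}$ together with a Vandermonde rearrangement) identifies $\sum_{i=0}^e K_i(k)$, up to a nonzero constant factor, with $L_e$ evaluated at a linear shift of $k$; since $\deg L_e=e$ there are at most $e$ complex roots, so $L_e$ has exactly $e$ distinct integer roots lying in $[1,n]$.

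The main obstacle is the lower bound $|S|\ge e$. For general $q$ the Hamming scheme is not the orbit scheme of any group action on $Z_q^n$, so one cannot short-circuit the argument via ordinary character theory; the bound must be extracted either from the full machinery of Delsarte's linear-programming bound and the Q-polynomial duality of the scheme, or from the delicate combinatorial inversion arguments of Bassalygo and Lenstra that count the outer distribution directly. By comparison, the algebraic step that translates the tiling condition into the eigenvalue equation $\lambda_k u_k=0$ is essentially formal, and the final identification with the polynomial $L_e$ of the statement is a routine generating-function exercise.
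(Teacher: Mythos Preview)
The paper does not actually prove Lloyd's theorem: it is quoted as a classical result, with references to Lloyd for the prime-power case and to Bassalygo, Delsarte, and Lenstra for arbitrary $q$, and is then used as a black box to eliminate the three surviving parameter triples in Table~\ref{tab:solutions}. So there is no ``paper's own proof'' to compare against.

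Your sketch is a faithful outline of the Delsarte association-scheme proof (one of the cited references), and the overall architecture is correct: the Bose--Mesner algebra of the Hamming scheme $H(n,q)$ gives the Krawtchouk eigenvalues, the tiling condition $B\chi_C=\mathbf{1}$ kills all eigencomponents $u_k$ with $\sum_{i\le e}K_i(k)\neq 0$, and the identification of $\sum_{i\le e}K_i$ with a shift of $L_e$ is indeed a routine generating-function manipulation. The one place to tighten is your justification of $|S|\ge e$: it is not the bare minimum distance $2e+1$ that forces this, but rather the P-polynomial structure together with $d(C)\ge e+1$ (so that $p(A_1)\chi_C\neq 0$ for any polynomial $p$ of degree $\le e$ with nonzero $A_0$-coefficient), or equivalently Delsarte's covering-radius bound $\rho(C)\le s'(C)$ applied with $\rho=e$. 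Either route is standard and fits into your sketch; just be precise about which hypothesis is doing the work.
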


Our previous work, together with Lloyd's theorem, allow us to finish the proof of Theorem \ref{thm:main}.

\begin{proof}[Proof of Theorem \ref{thm:main}]
    Let $q$ be a number satisfying the conditions in the theorem and suppose for contradiction that there exists a perfect $2-$error correcting code over a $q-$ary alphabet, with parameters $(n, M)$.
    
    By our discussion in Sections \ref{Sec:RN} and \ref{Sec:resolve}, along with Lemma \ref{lemma:solutions}, we know that the tuple $(q,n,M)$ is contained in one of the rows of Table \ref{tab:solutions}. By Theorem \ref{thm:lloyd}, it follows that the polynomial $L_2(x)$ given by 
    \begin{equation}
        \label{eqn:eq14}
    L_2(x) = (q-1)^2\frac{(n-x)(n-x-1)}{2} - (q-1)(x-1)(n-x) + \frac{(x-1)(x-2)}{2}
    \end{equation}
    has two distinct integer roots $x_1, x_2$ with $1 \le x_1 < x_2 \le n$. However, we check with \texttt{Magma} that this is not the case for any of the values of $q$ and $n$ in Table \ref{tab:solutions}. Consequently, for all the values of $q$ in the statement of Theorem \ref{thm:main}, there does not exist a perfect $2-$error correcting code over an alphabet of size $q$.
\end{proof}

\section{Limitations and future lines of work}
\label{Sec:limitationswork}
In this section, we shall briefly comment on the limitations of the methodology that we present in this paper, as well as propose future lines of work and different problems which could be solved by leveraging these techniques. 

\subsection{Limitations of our methodology}
\label{Sec:limitations}
Firstly, we remark that our methods can show non-existence of perfect $2-$error correcting codes for a fixed value of $q$, but they are not able to handle infinite families of $q$. The main reason why is due to the fact that the coefficient $D_q$ in \eqref{eqn:RN} depends on $q$ and it would no longer be constant.

For this reason, even if the set \eqref{eqn:finitelymanyd} remains finite (which could be achieved by fixing the prime divisors of $q$, for example), we would effectively need to obtain all integral points for a \textbf{family of Mordell curves} $E_{d(q)}$. In this situation, the results of Siegel \cite{Siegel} do not apply and, consequently, it is entirely possible that there are infinitely many integral points to consider. In addition, and to the best of our knowledge, there are not any techniques that allow to determine integral points over families of elliptic curves in an effective manner. This is why we believe that, in order to prove Conjecture \ref{conj:nomore}, even just for $e=2$, significantly new ideas will need to be introduced.

If we fix $q$, we note that, in principle, all the techniques that we have presented in this paper would be applicable for any $q$. However, in practice, it is computationally unfeasible to resolve \eqref{eqn:RN} for values of $q$ other than those in Theorem \ref{thm:main}. For this reason, it is unlikely that the range of values of $q$ can be extended much further with our methodology. 

This is mainly due to the fact that, despite the computational improvements that we have introduced in Section \ref{Sec:resolve}, we ultimately need to find a list of generators for the group $E_d(\Q)$, where $E_d$ is the Mordell curve \eqref{eqn:Mordell}. As the conductor of $E_d$ grows, even finding one rational point on $E_d$ is a very hard problem and, consequently, it is impractical to compute a basis for $E_d(\Q)$. 

To illustrate the computational difficulties, let us consider the case $q=94 = 2 \cdot 47$. This is excluded from Theorem \ref{thm:main} because our methodology would require us to work with the curve
\begin{equation}
    \label{eqn:eq15}
E_{2\cdot 47^2}: y^2 = x^3 + 161478403652,
\end{equation}
which has conductor {$N = 16328471028588 = 2^2\cdot3^3\cdot 47^2\cdot8273^2$}. While we are able to show that $E_{2\cdot 47^2}$ has rank $1$, $N$ is too large to apply the Heegner point methodology and, consequently, we are unable to find a generator of $E_{2\cdot 47^2}(\Q)$.

If $q$ has a large prime divisor $p$, there is a slightly different problem. In this instance, and for $q$ large, we need to use the \texttt{Magma} \textbf{IntegralPoints} subroutine to find integral points on the Mordell curve \eqref{eqn:Mordell}. However, this relies on the use of linear forms in $p-$adic elliptic logarithms (see \cite{IntegralPoints}) and the bounds obtained by applying this method are astronomical as $p$ grows, which makes it impossible to find all solutions to \eqref{eqn:RN}. In fact, this is the main reason why we needed to exclude prime divisors larger than $13$ if $q \le 600$.

Due to the high computational burden, we believe that the results that we present here cannot be significantly extended without the introduction of new algorithms to resolve the Ramanujan--Nagell equation \eqref{eqn:RN}. Promising results in this direction are the newly-improved estimates for linear forms in three logarithms by Mignotte and Voutier \cite{Voutier} and some work on the generalised Ramanujan--Nagell equation by Mutlu, Le and Soydan \cite{generalisedRN}, which, if extended, could allow for a more efficient resolution of \eqref{eqn:RN}.

\subsection{Future lines of work}
\label{Sec:futurelines}
In this paper, we used extensively two features of perfect $2-$error correcting codes: the perfect code condition \eqref{eqn:Hamming1} and the fact that Lloyd's Theorem (Theorem \ref{thm:lloyd}) applies. As long as these two conditions are satisfied, it seems plausible that our approach could generalise.

For instance, the methods that we present here could be used to approach perfect $2-$error correcting quantum codes, by mimicking this approach and changing the perfect code condition \eqref{eqn:perfectcondition} by the perfect quantum code condition:
\begin{equation}
\label{eqn:eq16}
    M = {q^n}\left/\sum_{j = 0}^e {\binom{n}{j}}(q^2-1)^j\right..
\end{equation}
Li and Xing \cite{quantum} classified perfect quantum codes over $q-$dimensional qudits, where $q$ is a prime power. In addition, they proved an analogue of Lloyd's theorem for quantum codes. This makes the situation completely analogous to the classical case and, for this reason, it seems reasonable that the approach that we presented here would directly generalise to quantum codes.

\section{Conclusions}
\label{Sec:conclusions}
The main findings of this paper are Theorems \ref{thm:main} and \ref{thm:finitelymany}, where we show that perfect $2$-error correcting codes over $q-$ary alphabets do not exist for more than $170$ new values of $q$, and show that, for a fixed value of $q$, there can only be finitely many $2-$error correcting perfect codes.

In previously existing literature, no values of $q$ with more than three prime factors were considered and, in addition, the methods used were specific to the cases at hand and could not be generalised. In addition, there had been no new results in over $50$ years, showing a clear gap in the literature.

In this paper, we successfully address this gap by showing non-existence of perfect codes for some values of $q$ with four prime factors ($q = 210, 330, 462$) and provide a method which would theoretically apply to any fixed value of $q$, with the only existing problems arising from current computational technology. Our results improve the number of $q$ for which $2-$error correcting codes have been classify by more than 2,000\%.

We remark that these results are also relevant from a practical point of view, since they show that, for many values of $q$, the optimal information transmission scheme while attempting to correct $2$ errors cannot be that of perfect codes and, consequently, alternative schemes need to be considered in order to achieve the best possible codes.

In order to verify all computations carried out in this paper, we provide the reader with the \texttt{Magma} code available in the author's GitHub repository (accessible at \href{https://github.com/PJCazorla/perfect-q-ary-codes}{https://github.com/PJ\-Cazorla/perfect-q-ary-codes}), which can be used to check the correctness of Theorem \ref{thm:main}.

Finally, we remark that our results are consistent with the existing literature in the cases where they overlap and, in addition, they support Conjecture \ref{conj:nomore}, which is widely believed to be true.

\end{document}